\theoremstyle{plain}
\newtheorem{theorem}{Theorem}[section]
\newtheorem{lemma}[theorem]{Lemma}
\theoremstyle{remark}
\newtheorem{remark}[theorem]{Remark}
\numberwithin{equation}{section}
\numberwithin{figure}{section}
\newcommand{\eps}{\varepsilon}
\newcommand{\R}{\mathbb{R}}
\newcommand{\E}{\mathbb{E}}
\newcommand{\Haarof}[1]{m_{#1}}
\begin{document}
	\title[Polynomial Tail Decay for Stationary Measures]{Polynomial Tail Decay for Stationary Measures}

    \begin{abstract}
        We show on complete metric spaces a polynomial tail decay for stationary measures of contracting on average generating measures.  
    \end{abstract}

    \author[sk]{Samuel Kittle}
    \author[ck]{Constantin Kogler}

    \email{s.kittle@ucl.ac.uk, kogler@ias.edu}

    \address{Samuel Kittle, Department of Mathematics, University College London, 25 Gordon Street, London WC1H 0AY, United Kingdom}

    \address{Constantin Kogler, Institute for Advanced Study, 1 Einstein Dr, Princeton, NJ 08540, United States of America}

    \maketitle

    \tableofcontents
	
    \section{Introduction}

   Let $X$ be a complete metric space. For a continuous map $g: X \to X$ denote by $\rho(g)$ the Lipschitz constant of $g$ defined as $$\rho(g) = \min\{ \rho > 0 \,:\, d(gx, gy) \leq \rho\cdot d(x,y) \text{ for all } x,y\in X \},$$ where we use throughout this paper the notation $gx = g(x)$ for all $x\in X$.  Consider the semi-group $L(X) = \{ g: X \to X \,:\, \rho(g) < \infty\}$ of Lipschitz maps. For a probability measure $\mu$ on $L(X)$ we define the \textbf{contraction rate} of $\mu$ as $$\chi_{\mu} = \E_{g \sim \mu}[\log\rho(g)] = \int \log \rho(g) \, d\mu(g),$$ whenever it exists. If a probability measure $\mu$ on $L(X)$ satisfies $\chi_{\mu} < 0$, we call it \textbf{contracting on average}.
   
   Given a further probability measure $\nu$ on $X$, the convolution $\mu * \nu$ is the measure uniquely determined by satisfying $$(\mu*\nu)(f) = \int\int f(gx) \, d\mu(g)d\nu(x)$$ for all continuous compactly supported functions $f: X \to \R$.
   
    The space $L(X)$ is endowed with the compact open topology, that is, the topology of uniform convergence on compact spaces. The main result is the following. We refer to the end of the introduction for the asymptotic notation used. 

\begin{theorem}\label{HutchinsonSuperGen1}
    Let $X$ be a complete metric space and let $\mu$ be a compactly supported probability measure on $L(X)$ with $\chi_{\mu} < 0$ and satisfying $\mathbb{E}_{g\sim \mu}[\rho(g)^t] < \infty$ for all $t \in \R$. Then there exists a unique probability measure $\nu$ on $X$ such that $\mu * \nu = \nu$. Moreover, there is $\alpha = \alpha(\mu) > 0$ such that for all $R > 0$ and $x \in X$,
    \begin{equation}\label{Concentration}
        \nu(\{ y \in X \,:\, d(x,y) \geq R \}) \ll_{\mu,x} R^{-{\alpha}},
    \end{equation} where the implied constant depends on $\mu$ and $x$.
\end{theorem}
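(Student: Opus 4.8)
The plan is to realize $\nu$ as the law of the limit of a backward random walk and to control its moments directly. Let $g_1, g_2, \ldots$ be i.i.d.\ samples from $\mu$, fix a basepoint $x_0 \in X$, and set $\Pi_0 = 1$, $\Pi_j = \rho(g_1)\cdots\rho(g_j)$, and $Z_j = g_1 g_2 \cdots g_j x_0$ with $Z_0 = x_0$. Since each $g_i$ is Lipschitz, telescoping gives $d(Z_{j+1}, Z_j) \le \Pi_j \, D_{j+1}$ where $D_{j+1} = d(g_{j+1}x_0, x_0)$, and hence $d(x_0, Z_{\infty}) \le \sum_{j \ge 0} \Pi_j D_{j+1}$ whenever the right-hand side converges. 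Because $\mathrm{supp}(\mu)$ is compact and the evaluation $g \mapsto g x_0$ is continuous for the compact-open topology, the displacements $D_{j}$ are bounded by a constant $M = M(\mu, x_0) < \infty$.

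The key device is the moment function $\phi(\alpha) = \E_{g\sim\mu}[\rho(g)^{\alpha}]$. By hypothesis it is finite for every $\alpha \in \R$, so it is smooth near $0$ with $\phi(0) = 1$ and $\phi'(0) = \E[\log\rho(g)] = \chi_{\mu} < 0$; thus we may fix $\alpha > 0$ small enough that $\phi(\alpha) < 1$. Now I would exploit the subadditivity inequality $(\sum_j a_j)^{\alpha} \le \sum_j a_j^{\alpha}$, valid for $a_j \ge 0$ and $0 < \alpha \le 1$, together with the crucial independence observation that $\Pi_j$ depends only on $g_1, \ldots, g_j$ while $D_{j+1}$ depends only on $g_{j+1}$. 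This yields
$$\E\Big[\Big(\sum_{j\ge 0}\Pi_j D_{j+1}\Big)^{\alpha}\Big] \le \sum_{j \ge 0} \E[\Pi_j^{\alpha}]\,\E[D_{j+1}^{\alpha}] \le M^{\alpha}\sum_{j\ge 0}\phi(\alpha)^{j} = \frac{M^{\alpha}}{1-\phi(\alpha)} < \infty,$$
using $\E[\Pi_j^{\alpha}] = \phi(\alpha)^{j}$ by independence of the factors. In particular $\sum_j \Pi_j D_{j+1} < \infty$ almost surely, so $(Z_j)$ is almost surely Cauchy and, by completeness of $X$, converges to a limit $Z_{\infty}$. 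Setting $\nu = \mathrm{law}(Z_{\infty})$ and using continuity of $g_1$ together with the shift-invariance and independence of the driving sequence shows $\mu * \nu = \nu$, giving existence.

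For the tail bound, the displayed estimate says $\E_{\nu}[d(x_0, y)^{\alpha}] = \E[d(x_0, Z_{\infty})^{\alpha}] < \infty$, whence Markov's inequality gives $\nu(\{y : d(x_0, y) \ge R\}) \ll_{\mu} R^{-\alpha}$. For a general basepoint $x$ I would use the triangle inequality: when $R > 2 d(x, x_0)$ the event $\{d(x, y) \ge R\}$ is contained in $\{d(x_0, y) \ge R/2\}$, while for $R \le 2d(x,x_0)$ the bound $\nu(\,\cdot\,) \le 1$ is absorbed into the implied constant, producing the claimed dependence $\ll_{\mu, x}$. Uniqueness follows from forward contraction: the strong law of large numbers gives $\frac1n \log \Pi_n \to \chi_{\mu} < 0$, so $\Pi_n \to 0$ almost surely and $d(g_1\cdots g_n x, g_1 \cdots g_n x') \le \Pi_n d(x, x') \to 0$; testing any stationary $\nu'$ against bounded Lipschitz functions and passing to the limit in $\nu' = \mu^{*n} * \nu'$ forces $\nu' = \nu$.

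The main obstacle is the moment estimate in the second paragraph: everything hinges on the interplay between the subadditivity trick, which requires $\alpha \le 1$ and is what makes the otherwise-intractable $\alpha$-th moment of an infinite sum computable, and the independence that decouples $\E[\Pi_j^{\alpha} D_{j+1}^{\alpha}]$ into a product. The negativity of $\chi_{\mu}$ enters solely through guaranteeing an exponent $\alpha$ with $\phi(\alpha) < 1$, and this same $\alpha$ is exactly the polynomial decay rate in \eqref{Concentration}; extracting a larger (optimal) exponent would require analyzing the full range where $\phi(\alpha) < 1$, but any such $\alpha$ already suffices for the statement.
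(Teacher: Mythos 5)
Your proposal is correct, and the construction of $\nu$ (backward iteration $Z_j = g_1\cdots g_j x_0$, a.s.\ Cauchy by completeness, stationarity via shift-invariance, uniqueness via $\Pi_n \to 0$ and testing against bounded Lipschitz functions) coincides with the paper's. Where you genuinely diverge is the tail estimate. The paper proves \eqref{Concentration} by a large deviation route: Cram\'er's theorem applied to $\log\rho(\gamma_i)$ plus Borel--Cantelli give events $E_n$ with $\mathbb{P}[E_n^c] \ll e^{-\delta_1 n}$ on which the products contract geometrically; this yields exponentially fast convergence of $\mu^{*n}*\delta_x$ to $\nu$ against bounded Lipschitz test functions, and then choosing $n \asymp_{\mu,x} \log R$ converts the two exponential error terms into a polynomial bound in $R$. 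You instead prove the stronger statement that $\nu$ has a finite moment: picking $\alpha \in (0,1]$ with $\phi(\alpha) = \E[\rho(g)^\alpha] < 1$ (possible since $\phi$ is finite everywhere, hence differentiable at $0$ with $\phi'(0) = \chi_\mu < 0$), the subadditivity $\bigl(\sum_j a_j\bigr)^\alpha \le \sum_j a_j^\alpha$ and the independence of $\Pi_j$ from $D_{j+1}$ give $\E\bigl[d(x_0, Z_\infty)^\alpha\bigr] \le M^\alpha/(1-\phi(\alpha)) < \infty$, and Markov's inequality finishes. This is essentially the classical moment argument (in the spirit of the Guivarc'h--Le Page moment estimates the paper cites for the self-affine case), transplanted to general complete metric spaces. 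Your route is shorter, avoids large deviations entirely, uses only finiteness of $\E[\rho(g)^t]$ for $t$ in a neighbourhood of $0$ rather than for all $t\in\R$, and yields finite $\alpha$-moments of $\nu$, which is slightly stronger than polynomial tail decay. What the paper's approach buys in exchange is flexibility: because it only consumes a large deviation principle for $\log\rho(\gamma_1\cdots\gamma_n)$, it also proves Theorem~\ref{HutchinsonLyapunovGen}, where the hypothesis $\chi_\mu<0$ is relaxed to $\lambda_\mu<0$ plus the LDP assumption \eqref{LDPAssump}; there your decoupling $\E[\Pi_j^\alpha D_{j+1}^\alpha]=\E[\Pi_j^\alpha]\,\E[D_{j+1}^\alpha]$ is unavailable, since $\rho(\gamma_1\cdots\gamma_n)$ need not factor into independent terms. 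The paper's method also produces the quantitative equidistribution statement (Lemma~\ref{ExponentialConvergence}) as a byproduct, which is of independent interest.
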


    Theorem~\ref{HutchinsonSuperGen1} applies in particular when $\mu$ is finitely supported. The assumption that $\mathbb{E}_{g\sim \mu}[\rho(g)^t] < \infty$ implies when $t < 0$ that $\rho(g) > 0$ for every $g \in \mathrm{supp}(\mu)$. We remark that we do not require $X$ to be locally compact or separable, yet we observe that when $\mu$ is countably supported, it follows from our proof that $\nu$ is supported on a separable subset. The assumption that $\mu$ is compactly supported does not imply that $\mathbb{E}_{g\sim \mu}[\rho(g)^t] < \infty$ for any $t \in \R$, as we show in section~\ref{L(X)example}.
    
    As discussed in section~\ref{section:PolynomialLower}, when $X = \R^d$ and $\nu$ is a self-similar measure, we can easily deduce a polynomial lower bound for $\nu(\{ x \in \R^d \,:\, |x| \geq R \})$ provided there is some $g\in \mathrm{supp}(\mu)$ such that $\rho(g) > 1$. The authors were initially motivated to prove \eqref{Concentration} in their work on absolutely continuous self-similar measures \cite{KittleKoglerAC} to show that smoothenings of contracting on average self-similar measures have finite differential entropy, which follows from a similar argument to Lemma~\ref{FiniteDiffEnt}.
    
    The existence and uniqueness of $\nu$ is well known in numerous cases (cf., for example, \cite{Hutchinson1981}, \cite{Barnselyetal1988}, \cite{Stenflo2012}), so the main novelty lies in \eqref{Concentration}. The quantity $ \nu(\{ y \in X \,: \, d(x,y) \geq R\})$ as $R$ grows is called the tail of a stationary measure, and we refer to \eqref{Concentration} as polynomial tail decay. We will comment on what is known about tail decay results similar to \eqref{Concentration} below, yet first state a version of Theorem~\ref{HutchinsonSuperGen1} using the Lyapunov exponent instead of the contraction rate. Indeed, we define the Lyapunov exponent of $\mu$ as $$\lambda_{\mu} = \lim_{n \to \infty}\frac{\E_{g \sim \mu^{*n}}[\log \rho(g)]}{n}  = \inf_{n \geq 1}\frac{\E_{g \sim \mu^{*n}}[\log \rho(g)]}{n},$$ whenever it exists, where the limit and infimum are equal by Fekete's lemma. 
    
    The reader may observe that $\lambda_{\mu} \leq \chi_{\mu}$. The latter inequality is often strict, as the following example shows. We consider the $\mathrm{GL}_d(\R)$ action on $\R^d$ and let $\mu = \delta_A$ for $A \in \mathrm{GL}_d(\R)$. In this setting, $\rho(A) = ||A||_{\mathrm{op}}$ with the operator norm being equal to the maximal singular value of $A$ and $\lambda_{\mu}$ is equal to the logarithm of the spectral radius of $A$. Choosing then, for example, $d = 2$ and $A = (\begin{smallmatrix} 1 & 1 \\ 0 & 1 \end{smallmatrix})$, we have $\lambda_{\mu} = 0$ while $\chi_{\mu} = \frac{1}{2}\log(\tfrac{3 + \sqrt{5}}{2}) > 0$.

    We can establish the conclusion of Theorem~\ref{HutchinsonSuperGen1} by assuming that $\lambda_{\mu} < 0$ and requiring a large deviation principle for $\rho$. To introduce notation, denote by 
    \begin{equation}\label{gammadef}
        \gamma_1, \gamma_2, \ldots
    \end{equation}
    independent $\mu$-distributed random variables on $L(X)$ sampled from the probability space $(\Omega, \mathscr{F}, \mathbb{P})$.

    \begin{theorem}\label{HutchinsonLyapunovGen}
        Let $X$ be a complete metric space and let $\mu$ be a compactly supported probability measure on $L(X)$ with $\lambda_{\mu} < 0$. Assume further that for every $\eps > 0$ there is some $\delta > 0$ such that for sufficiently large $n$, 
        \begin{equation}\label{LDPAssump}
            \mathbb{P}\left[ \big| n\lambda_{\mu}  - \log \rho(\gamma_1\cdots \gamma_n)  \big| > \eps n   \right] \leq e^{-\delta n}.
        \end{equation}
        Then the conclusion of Theorem~\ref{HutchinsonSuperGen1} holds.
    \end{theorem}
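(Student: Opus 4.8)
The plan is to deduce Theorem~\ref{HutchinsonLyapunovGen} from Theorem~\ref{HutchinsonSuperGen1} by passing to a convolution power. Since $\lambda_{\mu} < 0$ and $\lambda_{\mu} = \inf_{n\ge 1}\chi_{\mu^{*n}}/n$, where $\chi_{\mu^{*n}} = \E_{g\sim\mu^{*n}}[\log\rho(g)]$ is the contraction rate of $\mu^{*n}$, there is a fixed $N$ with $\chi_{\mu^{*N}} < 0$; the measure $\mu^{*N}$ will play the role of $\mu$ in Theorem~\ref{HutchinsonSuperGen1}. The reduction rests on the observation that $\mu$ and $\mu^{*N}$ have the same stationary measures: if $\mu*\nu = \nu$ then $\mu^{*N}*\nu = \nu$, and conversely, once $\nu$ is the unique $\mu^{*N}$-stationary measure, the identity $\mu^{*N}*(\mu*\nu) = \mu*(\mu^{*N}*\nu) = \mu*\nu$ forces $\mu*\nu = \nu$; the same identity shows any $\mu$-stationary measure is $\mu^{*N}$-stationary, hence equal to $\nu$. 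It therefore suffices to verify the hypotheses of Theorem~\ref{HutchinsonSuperGen1} for $\mu^{*N}$.

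The structural hypotheses are straightforward. Composition $L(X)\times\cdots\times L(X)\to L(X)$ is continuous for the compact-open topology, so $\mu^{*N}$, being the pushforward of $\mu^{\otimes N}$ under $(g_1,\dots,g_N)\mapsto g_1\cdots g_N$, is again a compactly supported probability measure on $L(X)$, and $\chi_{\mu^{*N}} < 0$ by the choice of $N$. It remains to verify the moment condition $\E_{g\sim\mu^{*N}}[\rho(g)^t] < \infty$ for every $t\in\R$. Writing $S_n = \log\rho(\gamma_1\cdots\gamma_n)$, this is exactly the statement that $\E[e^{tS_N}] < \infty$ for all $t$, and this is the step where assumption~\eqref{LDPAssump} is used.

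For the moment bound I would estimate $\E[e^{tS_N}]$ by integrating the exponential weight against the large-deviation tail of $S_N$. On the concentration event $\{|S_N - N\lambda_{\mu}|\le\eps N\}$ the integrand is bounded by $e^{tN(\lambda_{\mu}+\eps)}$ or $e^{tN(\lambda_{\mu}-\eps)}$, which is finite; on the complementary large-deviation events one writes $S_N = N(\lambda_{\mu}\pm s)$ with $s>\eps$ and bounds
\begin{equation*}
    \E\!\left[e^{tS_N}\mathbf 1_{\{S_N>N(\lambda_{\mu}+s)\}}\right]\ \text{and}\ \E\!\left[e^{tS_N}\mathbf 1_{\{S_N<N(\lambda_{\mu}-s)\}}\right]
\end{equation*}
by integrating $e^{tN(\lambda_{\mu}\pm s)}$ against the tail estimate $\mathbb{P}[\,\pm(S_N - N\lambda_{\mu}) > sN\,]\le e^{-\delta(s)N}$ supplied by~\eqref{LDPAssump} at level $s$. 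The main obstacle is precisely the far upper tail ($t>0$, $s\to\infty$, that is $\rho(g)\to\infty$) together with the far lower tail ($t<0$, $\rho(g)\to 0$): here one needs the exponential decay rate $\delta(s)$ to outgrow the linear gain $ts$ produced by the weight $e^{tS_N}$, so that the integral over $s$ converges. This is exactly the force of~\eqref{LDPAssump} holding for \emph{every} $\eps>0$, and it is the one place where compact support of $\mu$ alone would not suffice, since $\rho$ may be unbounded on $\supp{\mu}$. Once $\E[e^{tS_N}] < \infty$ is established for all $t$, Theorem~\ref{HutchinsonSuperGen1} applied to $\mu^{*N}$ produces a unique $\mu^{*N}$-stationary measure satisfying~\eqref{Concentration}, and by the first paragraph this is the unique $\mu$-stationary measure, which completes the proof.
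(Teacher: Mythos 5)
Your reduction via a convolution power founders at the one step you flag as delicate: verifying the moment hypothesis $\E_{g\sim\mu^{*N}}[\rho(g)^t]<\infty$ of Theorem~\ref{HutchinsonSuperGen1} from assumption~\eqref{LDPAssump}. That implication is false, not merely unproven. Assumption~\eqref{LDPAssump} provides, for each $\eps$, \emph{some} $\delta(\eps)>0$; monotonicity of the event lets you take $\delta$ non-decreasing in $\eps$, but nothing forces $\delta(\eps)$ to grow, let alone to dominate the linear gain $t\eps$ that your integration over deviation levels requires --- $\delta(\eps)$ may perfectly well be constant for all $\eps\geq 1$. Concretely, consider laws for which $S_n-n\lambda_{\mu}$ (with $S_n=\log\rho(\gamma_1\cdots\gamma_n)$) is $0$ with probability $1-e^{-n}$ and otherwise is drawn from a Pareto law with $\Prob[T>u]=\min\{1,u^{-2}\}$: then $\Prob[|S_n-n\lambda_{\mu}|>\eps n]\leq e^{-n}$ for every $\eps>0$ and every $n$, so \eqref{LDPAssump} holds with $\delta\equiv 1$, yet $\E[e^{tS_N}]=\infty$ for every fixed $N$ and every $t>0$. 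Since your argument tries to extract moments from \eqref{LDPAssump} alone, treating $S_N$ abstractly, this shows the extraction cannot work for any choice of $N$. This is not a fixable detail: Theorem~\ref{HutchinsonLyapunovGen} is stated separately from Theorem~\ref{HutchinsonSuperGen1} precisely to cover measures for which no such moment bounds are available, so a reduction that re-derives them is structurally doomed. (A secondary issue: compact support of $\mu^{*N}$ requires joint continuity of composition in the compact-open topology, which holds when $X$ is locally compact but is problematic for general complete metric spaces; and the paper's fallback of requiring only $A_x<\infty$ would, for $\mu^{*N}$, again force $\rho$ to be bounded on $\supp{\mu}$, which compact support does not give.)

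The paper's route is different and avoids the issue entirely: it reruns the proof of Theorem~\ref{HutchinsonSuperGen1} verbatim, substituting assumption~\eqref{LDPAssump} for Lemma~\ref{lambdaLemma}. This works because every probabilistic input in that proof --- the Borel--Cantelli argument making $z_n(x,\omega)$ Cauchy, the events $F_n$ and $E_n$ in Lemma~\ref{ExponentialConvergence}, and the final bound with $n\asymp\log R$ --- only ever uses exponential bounds on the probability that $\rho(\gamma_1\cdots\gamma_n)$ exceeds $\lambda^n$. In Theorem~\ref{HutchinsonSuperGen1} those bounds are \emph{produced} from the moment hypothesis via Cramér's theorem (Lemma~\ref{rhoLDP}), applied to the products $\rho(\gamma_1)\cdots\rho(\gamma_n)$; in Theorem~\ref{HutchinsonLyapunovGen} they are handed to you directly by \eqref{LDPAssump}, applied to $\rho(\gamma_1\cdots\gamma_n)$ itself. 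Your first paragraph --- that $\mu$-stationary and $\mu^{*N}$-stationary measures coincide once the latter is unique --- is correct and standard, but the reduction collapses at the moments, and the direct rerun is the way through.
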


    Consider for $d \geq 1$ the group of affine transformations $\mathrm{Aff}(\R^d)$ of $\R^d$, that is the maps $x \mapsto Ax + b$ with $A \in \mathrm{GL}_d(\R)$ and $b\in \R^d$. If $\mu$ is a finitely supported probability measure on $\mathrm{Aff}(\R^d)$ with unique stationary measure $\nu$, then $\nu$ is called a \textbf{self-affine measure}. 
    
    Much of the literature on tail estimates of stationary measures concerns self-affine measures. Indeed, as the authors learned after proving Theorem~\ref{HutchinsonSuperGen1}, polynomial tail decay is known for self-affine measures and follows, for example, from the moment estimates of \cite{GuivarchLePage2016}*{Proposition 5.1}.

    We now comment on previous results on the tail behaviour of self-affine measures. Let $\mu$ be a probability measure on $\mathrm{Aff}(\R^d)$ and denote by $A(\mu)$ the pushforward of $\mu$ under $g \mapsto A(g)$ with $g(x) = A(g)x + b(g)$ for $A(g) \in \mathrm{GL}_d(\R)$ and $b(g) \in \R^d$ for all $x \in \R^d$. Note that the Lipschitz constant of an element $g \in \mathrm{Aff}(\R^d)$ is the operator norm $||A(g)||_{\mathrm{op}}$. For simplicity, assume in the following that $\mu$ is compactly supported. 
    
    In pioneering work, Kesten \cite{Kesten1973} developed a renewal theory for random matrix products. As an application, it is shown in \cite[Theorem B]{Kesten1973} that if (among some further assumptions) all entries of the matrices in the support of $A(\mu)$ are $>0$ and the set of logarithms of the largest eigenvalue of the products of the matrices in the support of $A(\mu)$ is dense in $\R$, then the limit $R^{\alpha}\nu(\{ x\in \R^d \,:\, |x| \geq R \})$ exists as $R \to \infty$ for some $\alpha > 0$. For $d = 1$, Kesten's result was partially rediscovered by Grincevi\v{c}ius \cite{Grincevicjus1975}, who also dealt with the case that $\log A(g)$ for $g \in \mathrm{supp}(\mu)$ does not generate a dense subgroup. In fact, in \cite{Grincevicjus1975}*{Assertion 1, Page 9} a version of Theorem~\ref{HutchinsonSuperGen1} is shown in the self-affine case for $d = 1$ under the assumption that $A(g) > 0$ for all $g \in \mathrm{supp}(\mu)$. Goldie \cite{Goldie1991} used the methods of \cite{Grincevicjus1975} to generalise Kesten's result in dimension one. Based on Goldie's results, Kevei \cite{Kevei2016} gave fine estimates for $\nu(\{ x\in \R \,:\, |x| \geq R \})$ under weak assumptions when $d = 1$. 
    
    Denote by $\Gamma_{A(\mu)} = \langle \mathrm{supp} \, A(\mu) \rangle < \mathrm{GL}_d(\R)$ the semi-group generated by the support of $A(\mu)$. Guivarc'h-Le Page \cite{GuivarchLePage2016}*{Theorem C} proved that when $d \geq 2$, $\lambda_{\mu} < 0$ and $\mathrm{supp}(\mu)$ does not have a common fixed point as well as if $\Gamma_{A(\mu)}$ is strongly irreducible and contains a proximal element (i.e. an element with a unique simple dominant eigenvalue), then $\lim_{R \to \infty} R^{-\alpha}(R \cdot \nu)$ converges to a limit Radon measure in the vague topology on $\mathbb{R}^d \backslash \{ 0 \}$, where $(R \cdot \nu)$ is the measure given by $(R \cdot \nu)(B) = \nu(R\cdot B)$ for $B$ a measurable set in $\mathbb{R}^d \backslash \{ 0 \}$ and $R\cdot B = \{R \cdot b \,:\, b \in B \}$. When $d = 1$, the same conclusion is shown under the additional assumption that $\Gamma_{A(\mu)}$ is not contained in a subgroup of $\R^*$. There are several further noticeable results in \cite{GuivarchLePage2016}, for example, also dealing with the case when the Lyapunov exponent is positive. It would be an interesting direction to generalise these results to self-similar measures, or to understand more precisely how the proximality assumption can be weakened.

    Kloeckner \cite{Kloeckner2022} proved general moment estimates for invariant function systems, from which tail estimates can be deduced. 

    Compared to these results, Theorem~\ref{HutchinsonSuperGen1} and Theorem~\ref{HutchinsonLyapunovGen} establish a coarse tail bound that holds under weak assumptions for general complete metric spaces.  Furthermore, we do not rely on renewal theory and only exploit the large deviation principle. 

    \subsection*{Notation}

    We use the asymptotic notation $A \ll B$ or $A = O(B)$ to denote that $|A| \leq CB$ for a constant $C > 0$. If the constant $C$ depends on additional parameters, we add subscripts. Moreover, $A \asymp B$ denotes $A \ll B$ and $B \ll A$. 

    For $x \in X$ and $R > 0$ we denote by $B_R(x)$ the open $R$-ball around $x$. When $X = \R^d$, we furthermore write $B_R = B_R(0)$. For a subset $B \subset X$ we denote by $B^c = X\backslash B$ the complement of $B$ in $X$.

     \subsection*{Acknowledgment} The first-named author gratefully acknowledges support from the Heilbronn Institute for Mathematical Research. This work was conducted during the second-named author's doctoral studies at the University of Oxford. We thank Timothée Bénard for pointing out that \eqref{Concentration} follows for self-affine measures from \cite{GuivarchLePage2016} and the referees for their thorough comments.

    \section{Proof of Theorem~\ref{HutchinsonSuperGen1} and Theorem~\ref{HutchinsonLyapunovGen}}

    \label{section:ProofMain}

In this section we prove Theorem~\ref{HutchinsonSuperGen1} and Theorem~\ref{HutchinsonLyapunovGen}. While existence and uniqueness of the stationary measure $\nu$ follows along standard lines, we include a short proof as the argument is needed to establish the polynomial tail decay of $\nu$. The latter follows from first showing that $\nu$ is well-approximated by $\mu^{*n}*\delta_x$ and then applying the large deviation principle. Indeed, we will use the following lemma that follows from Cramer's theorem. We denote, as in \eqref{gammadef}, by $\gamma_1, \gamma_2, \ldots$ independent $\mu$-distributed random elements of $L(X)$ sampled from the probability space $(\Omega, \mathscr{F}, \mathbb{P})$. 

\begin{lemma}\label{rhoLDP} Let $\mu$ be a probability measure on $L(X)$ satisfying $\mathbb{E}_{g\sim \mu}[\rho(g)^t] < \infty$ for all $t \in \R$. Then for every $\eps > 0$ there is $\delta = \delta(\mu,\eps) > 0$ such that for all sufficiently large $n$,
\begin{equation*}
    \mathbb{P}\Big[ \, | n\chi_{\mu} - \log \rho(\gamma_1) \cdots \rho(\gamma_n) | > \eps n \,\Big] \leq e^{-\delta n}.
\end{equation*}
\end{lemma}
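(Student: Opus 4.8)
The plan is to
reduce the claim to a direct application of Cramér's theorem for sums of
i.i.d. real random variables. First I would introduce the real-valued random
variables $Y_i = \log \rho(\gamma_i)$. Since the $\gamma_i$ are i.i.d.\ with
law $\mu$, the $Y_i$ are i.i.d.\ with common law the pushforward of $\mu$ under
$g \mapsto \log \rho(g)$, and their common mean is exactly
$\E_{g\sim\mu}[\log\rho(g)] = \chi_\mu$, which is finite because
$\E_{g\sim\mu}[\rho(g)^t] < \infty$ for all $t$ (in particular for $t$ near $0$,
guaranteeing integrability of $\log\rho$). Rewriting the event in the statement,
$\log\big(\rho(\gamma_1)\cdots\rho(\gamma_n)\big) = \sum_{i=1}^n Y_i$, so the
probability to be bounded is precisely
$\mathbb{P}\big[\,|\,S_n - n\chi_\mu\,| > \eps n\,\big]$ where $S_n = Y_1 + \cdots + Y_n$.

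The key step is to verify the finiteness of the logarithmic moment generating
function of $Y_1$ on a neighbourhood of the origin, which is exactly what the
hypothesis provides. Indeed, for every $t \in \R$ we have
$\E\big[e^{t Y_1}\big] = \E_{g\sim\mu}\big[\rho(g)^t\big] < \infty$, so the
cumulant generating function $\Lambda(t) = \log \E[e^{tY_1}]$ is finite on all
of $\R$. This is far stronger than the hypothesis needed for Cramér's theorem,
and in particular the Legendre transform (rate function) $I(y) = \sup_{t\in\R}
\big(ty - \Lambda(t)\big)$ is a good convex rate function that vanishes only at
$y = \chi_\mu$ and is strictly positive elsewhere.

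Next I would apply Cramér's theorem to the sample means $S_n/n$. The upper bound
for the large deviation principle gives, for any closed set $F \subset \R$,
$\limsup_{n\to\infty} \tfrac1n \log \mathbb{P}[S_n/n \in F] \leq -\inf_{y\in F} I(y)$.
Taking $F = \{\, y : |y - \chi_\mu| \geq \eps \,\}$, the complement of the open
$\eps$-interval around the mean, and setting
$\delta' = \inf\{\, I(y) : |y - \chi_\mu| \geq \eps \,\}$, the strict positivity
of $I$ away from $\chi_\mu$ together with the compactness of the sublevel sets of
$I$ (a consequence of $\Lambda$ being finite everywhere, hence $I$ being a good
rate function) guarantees $\delta' > 0$. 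Hence for any $0 < \delta < \delta'$ and
all sufficiently large $n$ we obtain
$\mathbb{P}[\,|S_n - n\chi_\mu| \geq \eps n\,] \leq e^{-\delta n}$, which is the
desired bound (and the weak inequality inside the event only strengthens it
relative to the strict inequality in the statement).

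The main obstacle, and the only genuinely quantitative point, is to confirm that
the infimum $\delta' = \inf\{\, I(y) : |y-\chi_\mu| \geq \eps\,\}$ is strictly
positive rather than merely nonnegative; this is where one uses that $\Lambda$
is finite on all of $\R$ so that $I$ has compact sublevel sets, preventing the
infimum from being approached only at infinity with values tending to $0$.
Everything else is bookkeeping: identifying $\log\rho(\gamma_1\cdots\gamma_n)$ as
the sum $S_n$ (note the statement writes the product $\rho(\gamma_1)\cdots
\rho(\gamma_n)$, not $\rho(\gamma_1\cdots\gamma_n)$, so no submultiplicativity is
even needed here), checking integrability of $\log\rho$, and translating the
asymptotic $\limsup$ statement of Cramér's theorem into a bound valid for all
sufficiently large $n$.
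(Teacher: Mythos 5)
Your proposal is correct and takes exactly the same route as the paper: the paper's proof is a one-line invocation of Cramér's theorem applied to $X_i = \log \rho(\gamma_i)$, and your argument simply fills in the details of that invocation (finiteness of the cumulant generating function from the moment hypothesis, strict positivity of the rate function away from $\chi_\mu$ via goodness of the rate function, and the translation of the $\limsup$ bound into an estimate valid for all large $n$). All of these verifications are sound, including your correct observation that the event concerns the product $\rho(\gamma_1)\cdots\rho(\gamma_n)$ rather than $\rho(\gamma_1\cdots\gamma_n)$, so no submultiplicativity is needed.
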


\begin{proof}
    This follows from Cramér's Theorem \cite{KlenkeBook}*{Theorem 23.3} applied to $X_i = \log \rho(\gamma_i)$.
\end{proof}

\begin{lemma}\label{lambdaLemma}
    Let $\mu$ be a probability measure on $L(X)$ with $\chi_{\mu} < 0$ and satisfying $\mathbb{E}_{g\sim \mu}[\rho(g)^t] < \infty$ for all $t \in \R$. Then there is $\lambda \in (0,1)$ such that for almost all $\omega \in \Omega$ and $n \geq 1$ sufficiently large (depending on $\omega$), $$\rho(\gamma_1(\omega) \cdots \gamma_n(\omega)) \leq \rho(\gamma_1(\omega)) \cdots \rho(\gamma_n(\omega)) \leq \lambda^n.$$  
\end{lemma}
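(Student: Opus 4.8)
The plan is to prove the two inequalities separately. The left-hand inequality $\rho(\gamma_1 \cdots \gamma_n) \leq \rho(\gamma_1) \cdots \rho(\gamma_n)$ holds deterministically for every $\omega$ and every $n$: it is simply submultiplicativity of the Lipschitz constant under composition. Indeed, for $g, h \in L(X)$ one has $d(gh x, gh y) \leq \rho(g)\, d(hx, hy) \leq \rho(g)\rho(h)\, d(x,y)$, so $\rho(gh) \leq \rho(g)\rho(h)$, and iterating this over the product $\gamma_1 \cdots \gamma_n$ gives the claim. No probabilistic input is needed here.

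For the right-hand inequality the idea is that the moment hypothesis forces $\chi_{\mu}$ to be finite, and the products $\rho(\gamma_1)\cdots\rho(\gamma_n)$ then grow at exponential rate $\chi_{\mu} < 0$ almost surely. Passing to logarithms, I want to arrange $\sum_{i=1}^n \log\rho(\gamma_i) \leq n \log\lambda$ for large $n$. I would fix $\eps > 0$ with $\eps < -\chi_{\mu}$ and apply Lemma~\ref{rhoLDP} to obtain $\delta = \delta(\mu, \eps) > 0$ such that, for all sufficiently large $n$,
\begin{equation*}
    \mathbb{P}\Big[ \log\big(\rho(\gamma_1)\cdots\rho(\gamma_n)\big) > n(\chi_{\mu} + \eps) \Big] \leq e^{-\delta n}.
\end{equation*}
Since $\sum_{n} e^{-\delta n} < \infty$, the Borel--Cantelli lemma yields that for almost all $\omega$ there is $N(\omega)$ so that for all $n \geq N(\omega)$ one has $\log(\rho(\gamma_1)\cdots\rho(\gamma_n)) \leq n(\chi_{\mu} + \eps)$. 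Setting $\lambda = e^{\chi_{\mu} + \eps}$, which lies in $(0,1)$ because $\chi_{\mu} + \eps < 0$, this is exactly $\rho(\gamma_1)\cdots\rho(\gamma_n) \leq \lambda^n$.

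I do not expect a serious obstacle here; the argument is essentially bookkeeping once Lemma~\ref{rhoLDP} is in hand. The only point worth checking is that the assumption $\mathbb{E}_{g\sim\mu}[\rho(g)^t] < \infty$ for all $t \in \R$ guarantees that $\log\rho(g)$ has finite expectation, so that $\chi_{\mu}$ is well-defined and finite and Lemma~\ref{rhoLDP} genuinely applies. As an alternative that avoids the large deviation estimate entirely, one could invoke the strong law of large numbers for the i.i.d.\ integrable variables $\log\rho(\gamma_i)$, giving $\tfrac{1}{n}\sum_{i=1}^n \log\rho(\gamma_i) \to \chi_{\mu}$ almost surely, and then use the same choice of $\lambda$; I would favour the Borel--Cantelli route only because Lemma~\ref{rhoLDP} is already available.
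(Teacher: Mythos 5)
Your proposal is correct and follows exactly the paper's route: the paper likewise proves the lemma by applying Lemma~\ref{rhoLDP} with a sufficiently small $\eps > 0$ (so that $\chi_{\mu} + \eps < 0$) together with the Borel--Cantelli lemma, with the left inequality being deterministic submultiplicativity of $\rho$. You have merely filled in the bookkeeping that the paper leaves implicit, so there is nothing to correct.
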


\begin{proof}
    The claim follows by applying Lemma~\ref{rhoLDP} to a sufficiently small $\eps > 0$ and the Borel-Cantelli Lemma.
\end{proof}

For notational convenience, write $$A_x = \sup_{\gamma\in \mathrm{supp}(\mu)} d(x,\gamma x),$$ which is finite since $\mu$ is compactly supported. In the proof of Theorem~\ref{HutchinsonSuperGen1}, we can drop the assumption of $\mu$ being compactly supported as
long as $A_x$ is finite for every $x\in X$.

\begin{proof} (of existence and uniqueness in Theorem~\ref{HutchinsonSuperGen1}) Given $x \in X$ and $\omega \in \Omega$ write $$z_n(x, \omega) = \gamma_1(\omega)\cdots \gamma_n(\omega) x.$$ We show that $\mathbb{P}$-almost surely, $z_n(x, \omega)$ is a Cauchy sequence. Indeed, let $\lambda \in (0,1)$ be as in Lemma~\ref{lambdaLemma}. Then almost surely for $k$ sufficiently large $d(z_k(x, \omega), z_{k + 1}(x, \omega)) \leq A_x\lambda^k$ and thus for sufficiently large $n$ and $m$, 
\begin{align}
    d(z_{n}(x, \omega),z_{m}(x,\omega)) &\leq \sum_{k = \min\{n,m \}}^{\max\{n,m\} - 1} d(z_k(x,\omega), z_{k + 1}(x,\omega)) \nonumber \\ &\leq A_x\sum_{k = \min\{n,m \}}^{\infty} \lambda^k = A_x \frac{\lambda^{\min\{n,m\}}}{1-\lambda}, \label{CauchyConvergence}
\end{align}
 which goes to zero as $\min\{n,m\}\to \infty$. Therefore, since $X$ is complete, the limit $\lim_{n \to \infty} z_n(x,\omega)$ exists for almost all $\omega \in \Omega$. The latter limit does not depend on $x$ as for almost all $\omega$ and sufficiently large $n$, $d(z_n(x,\omega), z_n(y,\omega)) \leq \lambda^n d(x,y),$ which goes to zero. Thus, there is a random variable $z:\Omega \to X$ such that for almost all $\omega$, 
 \begin{equation}\label{zdef}
     z(\omega) = \lim_{n \to \infty} z_n(x,\omega) = \lim_{n \to \infty} \gamma_1(\omega)\cdots \gamma_n(\omega)x
 \end{equation}
for all $x \in X$. 

 Let $\nu$ be the distribution of $z$. The measure $\nu$ is stationary since for any continuous bounded function $f$ on $X$, by dominated convergence,
 \begin{align*}
     (\mu*\nu)(f) &= \int\int f(gz) \, d\mu(g)d\nu(z) \\ &=  \int\int f(gz(\omega)) \, d\mu(g)d\mathbb{P}(\omega) \\
     &= \lim_{n \to \infty} \int\int f(gz_n(x,\omega)) \, d\mu(g)d\mathbb{P}(\omega) \\
     &= \lim_{n \to \infty} \int f(z_{n + 1}(x,\omega)) \, d\mathbb{P}(\omega) = \nu(f)
 \end{align*} for any $x\in X$.

 To finally show that the stationary measure is unique, let $\eta$ be a further stationary measure. Then for all $n\geq 1$ and any continuous bounded function $f$ on $X$, \begin{align*}
     \int f(x) \, d\eta(x) &=  \int\int f(gx) \, d\mu^{*n}(g)d\eta(x) = \int\int f(z_{n}(x,\omega)) \, d\mathbb{P}(\omega)d\eta(x).
 \end{align*} Letting $n \to \infty$, the right hand side tends to $\nu(f)$ by dominated convergence.
\end{proof}

\begin{remark}
 We have only used in the above proof that probability measures are uniquely characterised as positive linear functionals on $C_b(X)$ (the space of continuous bounded functions), which does not require $X$ to be locally compact or separable. Indeed, to prove this claim, note that given a subset $C \subset X$ we have $|d(x,C) - d(y,C)| \leq d(x,y)$ for $x,y \in X$ and therefore $x\mapsto d(x,C)$ is Lipschitz. If $C$ is measurable, it holds by continuity from above for a probability measure $\eta$ on $X$ that $\eta(C) = \lim_{n \to \infty} \eta(f_n)$ for $f_n(x) = r_n(d(x,C))$ with $x \in X$ and $r_n: \R_{\geq 0} \to \R_{\geq 0}$ to be defined for $n\geq 1$ as $$r_n(t) = \begin{cases}
     1 - nt & \text{ if } t \in [0,1/n], \\
     0 & \text{ if } t \geq 1/n.
 \end{cases}$$
\end{remark}

\begin{remark}
    The existence of the random variable $z$ from equation \eqref{zdef} has already been established by \cite{Stenflo2012}. Our argument is also inspired by the proof of the existence of Furstenberg measures exposed in \cite{BougerolLacroix1985}*{Chapter II}.
\end{remark}

To complete the proof of Theorem~\ref{HutchinsonSuperGen1}, it remains to show the estimate \eqref{Concentration}. To do so, we establish that $\mu^{*n}*\delta_x$ converges to $\nu$  exponentially fast. For a function $f: X \to \R$, we denote by $||f||_{\infty} = \sup_{x \in X} |f(x)|$ and by $\mathrm{Lip}(f) = \sup_{x,y \in X} \frac{|f(x) - f(y)|}{d(x,y)}$ the Lipschitz constant of $f$.

\begin{lemma}\label{ExponentialConvergence}
    Let $\mu$ be a probability measure on $L(X)$ with $\chi_{\mu} < 0$ and satisfying $\mathbb{E}_{g\sim \mu}[\rho(g)^t] < \infty$ for all $t \in \R$ and let $A_x$ be as above. Then there is $\theta > 0$ such that for a bounded Lipschitz function $f:X\to \R$, $x \in X$ and $n \geq 1$, $$\bigg|\int f(z) \, d\nu(z) - \int f(gx) \, d\mu^{*n}(g)\bigg| \ll_{\mu} A_x \max\{||f||_{\infty}, \mathrm{Lip}(f)  \}e^{-\theta n}.$$ 
\end{lemma}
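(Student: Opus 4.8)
The plan is to realise both integrals as expectations over the random walk and to exploit that $f$ is simultaneously bounded and Lipschitz. From the existence proof we have $\int f(gx)\,d\mu^{*n}(g) = \mathbb{E}[f(z_n(x,\cdot))]$, where $z_n(x,\omega) = \gamma_1(\omega)\cdots\gamma_n(\omega)x$, while $\int f\,d\nu = \mathbb{E}[f(z)]$ with $z$ the almost sure limit from \eqref{zdef}. Hence the left-hand side is at most $\mathbb{E}[|f(z) - f(z_n(x,\cdot))|]$, and for each $\omega$ I would bound the integrand by $\min\{2||f||_{\infty},\ \mathrm{Lip}(f)\,d(z_n(x,\omega), z(\omega))\}$, using $||f||_{\infty}$ when the two points are far apart and $\mathrm{Lip}(f)$ when they are close.

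Next I would control $d(z_n,z)$ pathwise, exactly as in \eqref{CauchyConvergence}. Telescoping and submultiplicativity of the Lipschitz constant give
$$d(z_n(x,\omega), z(\omega)) \leq \sum_{k \geq n} d(z_k(x,\omega), z_{k+1}(x,\omega)) \leq A_x \sum_{k\geq n} \rho(\gamma_1(\omega))\cdots\rho(\gamma_k(\omega)),$$
since each increment satisfies $d(z_k, z_{k+1}) \leq \rho(\gamma_1\cdots\gamma_k)\,d(x,\gamma_{k+1}x) \leq \rho(\gamma_1)\cdots\rho(\gamma_k)\,A_x$. The whole excursion of the walk past time $n$ is thus governed by the partial products $P_k = \rho(\gamma_1)\cdots\rho(\gamma_k)$.

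The heart of the argument is a good-event/bad-event split driven by Lemma~\ref{rhoLDP}. Fix $\eps \in (0, -\chi_{\mu})$ and set $\lambda = e^{\chi_{\mu} + \eps} \in (0,1)$, so that $\{P_k \leq \lambda^k\}$ contains $\{|k\chi_{\mu} - \log P_k| \leq \eps k\}$; Lemma~\ref{rhoLDP} then gives $\mathbb{P}[P_k > \lambda^k] \leq e^{-\delta k}$ for all large $k$. Let $E_n = \bigcap_{k\geq n}\{P_k \leq \lambda^k\}$, so a union bound yields $\mathbb{P}[E_n^c] \leq \sum_{k\geq n} e^{-\delta k} \ll_{\mu} e^{-\delta n}$. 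On $E_n$ the displacement bound collapses to $d(z_n,z) \leq A_x \sum_{k\geq n}\lambda^k = A_x \lambda^n/(1-\lambda)$, where I use the Lipschitz estimate; on $E_n^c$ I use the crude bound $2||f||_{\infty}$ together with the exponentially small probability. Combining,
$$\mathbb{E}[|f(z)-f(z_n)|] \leq \mathrm{Lip}(f)\,\frac{A_x\lambda^n}{1-\lambda} + 2||f||_{\infty}\,\mathbb{P}[E_n^c] \ll_{\mu} A_x\max\{||f||_{\infty}, \mathrm{Lip}(f)\}\,e^{-\theta n}$$
with $\theta = \min\{-\log\lambda,\ \delta\} > 0$, which is the claim.

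The main obstacle is the contribution of the bad event, where the partial products $P_k$ are atypically large and $z_n$, $z$ may be genuinely far apart: there the Lipschitz bound is useless, and I must fall back on the boundedness of $f$, trading the lost distance control for the exponential probability estimate supplied by Cramér's theorem (Lemma~\ref{rhoLDP}) summed over all scales $k \geq n$. I would double-check that this union bound over infinitely many scales costs only a constant factor (it does, by the geometric series), that the ``sufficiently large $n$'' in Lemma~\ref{rhoLDP} is harmless since finitely many small $n$ are absorbed into the implied constant, and that the degenerate case $A_x = 0$ (where $x$ is a common fixed point and both sides vanish) is immediate.
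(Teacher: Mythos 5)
Your proposal is correct and takes essentially the same route as the paper's proof: the same reduction to $\mathbb{E}[|f(z)-f(z_n(x,\cdot))|]$, the same good event $E_n=\bigcap_{k\geq n}\{\rho(\gamma_1)\cdots\rho(\gamma_k)\leq\lambda^k\}$ controlled by Lemma~\ref{rhoLDP} and a union bound, the geometric-series distance bound from \eqref{CauchyConvergence} on $E_n$, and the $2||f||_{\infty}$ bound weighted by $\mathbb{P}[E_n^c]$ off it. One remark: your final inequality inherits the paper's own slight imprecision, since the bad-event term $2||f||_{\infty}\,\mathbb{P}[E_n^c]$ carries no factor of $A_x$ (and $A_x$ may be arbitrarily small), so what both arguments literally establish is $\ll_{\mu}(1+A_x)\max\{||f||_{\infty},\mathrm{Lip}(f)\}e^{-\theta n}$, which is all that the application in the proof of Theorem~\ref{HutchinsonSuperGen1} requires.
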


\begin{proof}
    We continue with the notation from the proof of existence and uniqueness and let $\lambda \in (0,1)$ be as in Lemma~\ref{lambdaLemma}. Denote by $F_n$ the event that $\{ \rho(\gamma_1)\cdots \rho(\gamma_n) \leq \lambda^n \}$ and by $E_n  = \bigcap_{k = n}^{\infty} F_k$. Then, by Lemma~\ref{rhoLDP}, for some $\delta_1 > 0$ we have $\mathbb{P}[E_n^c] \ll_{\mu} e^{-\delta_1 n}$ . Indeed, this follows by using a suitable $\lambda \in (0,1)$ and the resulting $\delta > 0$ from Lemma~\ref{rhoLDP} as well as the union bound $\mathbb{P}[E_n^c] \leq \sum_{k \geq n} \mathbb{P}[F_n] \ll_{\mu} \sum_{k \geq n} e^{-\delta n} \ll_{\mu} e^{-\delta_1 n}$.
    
    We have shown in \eqref{CauchyConvergence} that for $\omega \in E_n$, $$d(z(\omega), z_n(x,\omega)) \leq \frac{A_x}{1-\lambda} \lambda^n.$$ To conclude, 
    \begin{align*}
        \bigg|\int f(z) \, d\nu(z) - \int f(gx) \, d\mu^{*n}\bigg| &\leq \int |f(z(\omega)) - f(z_n(x,\omega))| \, d\mathbb{P}(\omega) \\
        &=  \int_{E_n} |f(z(\omega)) - f(z_n(x,\omega))| \, d\mathbb{P}(\omega) \\ &+ \int_{E_n^c} |f(z(\omega)) - f(z_n(x,\omega))| \, d\mathbb{P}(\omega)  \\
        &\leq \frac{A_x}{1-\lambda} \lambda^n \mathrm{Lip}(f) + 2\mathbb{P}[E_n^c]||f||_{\infty},
    \end{align*} showing the claim for a sufficiently small $\theta$ such that $\max\{e^{-\delta_1}, \lambda \} \leq e^{-\theta}$.
\end{proof}

\begin{proof}(proof of Theorem~\ref{HutchinsonSuperGen1})
    Let $x\in X$ and $R > 0$. We first apply Lemma~\ref{ExponentialConvergence} to a suitable function. Let $F_R: \R\to \R$ be the function that is $0$ on $[-R/2,R/2]$, $1$ on $[-R,R]^C$ and the linear interpolation between these intervals. Then we consider the function on $X$ defined for $y \in X$ as $$f_{R,x}(y) = F_R(d(y,x)).$$ Note that $f_{R,x}$ is Lipschitz with $\mathrm{Lip}(f_{R,x}) \leq 2R^{-1}$. Thus by applying Lemma~\ref{ExponentialConvergence} for $n\geq 1$,
    \begin{align}\label{AppliedEffEqui}
        \nu(B_R(x)^c) \leq \int f_{R,x} \, d\nu \leq (\mu^{*n}*\delta_x)(B_{R/2}(x)^c) + O_{\mu}(A_xR^{-1}e^{-\theta n}).
    \end{align}
    We next give a suitable bound for $(\mu^{*n}*\delta_x)(B_{R/2}(x)^c)$. If $\gamma_1, \ldots, \gamma_n \in \mathrm{supp}(\mu)$ then 
    \begin{align*}
        d(x,\gamma_1\cdots \gamma_n x) &\leq \sum_{i = 1}^n d(\gamma_1\cdots \gamma_{i-1}x, \gamma_1\cdots \gamma_ix) \\ &\leq A_x(1 + \rho(\gamma_1) + \ldots + \rho(\gamma_1\cdots \gamma_{n-1})). 
    \end{align*} Let $\eps \in (0,1)$ be a fixed constant, $\lambda \in (0,1)$ be as in Lemma~\ref{lambdaLemma} and $E_n$ be as in the proof of Lemma~\ref{ExponentialConvergence}. Then note that (using notation from the proof of Lemma~\ref{ExponentialConvergence})
    \begin{align*}
        (\mu^{*n}*\delta_x)(B_{R/2}(x)^c) &= \int 1_{B_{R/2}(x)^c}(z_{n}(x,\omega)) \, d\mathbb{P}(\omega) \\
        &\leq \int_{E_{\lfloor \eps n \rfloor}} 1_{B_{R/2}(x)^c}(z_{n}(x,\omega)) \, d\mathbb{P}(\omega) + \mathbb{P}[E_{\lfloor \eps n \rfloor}^c]
    \end{align*} If $\omega \in E_{\lfloor \eps n \rfloor}$  then for all $m \geq \lfloor\eps n \rfloor$ it holds that $\rho(g_1\cdots g_m) \leq \lambda^m$. Thus for such an $\omega$ and $\rho_{\mathrm{sup}} = \sup_{g \in \mathrm{supp}(\mu)} \rho(g)$, 
    \begin{align*}
        d(x,z_n(x,\omega)) &\leq A_x(1 + \rho(\gamma_1) + \ldots + \rho(\gamma_1\cdots \gamma_{\lfloor n\eps \rfloor - 1}) + \lambda^{\lfloor n\eps \rfloor} + \ldots \lambda^n ) \\ 
        &\leq A_x\left(\frac{1}{1-\lambda} + n\eps \max\{ 1,  \rho_{\mathrm{sup}} \}^{n\eps} \right) \\
        &\leq D_1 A_x(1 + D_2^{n\eps})
    \end{align*}
    for suitably large constants $D_1$ and $D_2$ depending on $\mu$ and $\eps$. Letting $R$ being sufficiently large and choosing $n$ such that $4D_1 A_x(1 + D_2^{n\eps}) \leq R \leq 4D_1 A_x(1 + D_2^{(n + 1)\eps})$, or equivalently $n \asymp_{\mu, x, \eps} \log R$, it therefore follows that $$(\mu^{*n}*\delta_x)(B_{R/2}(x)^c) \leq \mathbb{P}[E_{\lfloor \eps n \rfloor}^c] \ll_{\mu} e^{-\delta_1 \eps n},$$ using that $\mathbb{P}[E_n^c] \ll_{\mu} e^{-\delta_1 n}$ as in the proof of Lemma~\ref{ExponentialConvergence}. Combining the latter with \eqref{AppliedEffEqui}, we conclude that $$\nu(B_R(x)^c) \ll_{\mu} A_x R^{-1}e^{-\theta n} + e^{-\delta_1 \eps n} \ll_{\mu} A_x R^{-1}R^{-O_{\mu,\eps}(\theta)} + R^{-O_{\mu, \eps}(\delta_1)} \ll_{\mu,x, \eps} R^{-\alpha}$$ for a suitable constant $\alpha>0$. Since $\eps$ is fixed, the claim follows. 
\end{proof}

\begin{proof}(of Theorem~\ref{HutchinsonLyapunovGen})
    The proof is identical to the one of Theorem~\ref{HutchinsonSuperGen1}, using directly the large deviation assumption instead of Lemma~\ref{lambdaLemma}.
\end{proof}

    \section{Further Remarks}

    \label{section:FurtherRemarks}

\subsection{Finiteness of Differential Entropy}

Let $\nu$ be an absolutely continuous measure on $\R^d$ with density $f_{\nu} \in L^1(\R^d)$. Then the differential entropy of $\nu$ is defined as 
\begin{equation}\label{Def:DiffEnt}
    H(\nu) = \int h(f_{\nu}) \,\, d\Haarof{\R^d} \quad\quad \text{ for } \quad\quad  h(x) = \begin{cases}
        -x\log x & \text{if } x > 0 \\
        0 & \text{otherwise}
    \end{cases} 
\end{equation} and $\Haarof{\R^d}$ the Lebesgue measure on $\R^d$. We next prove the Lemma~\ref{FiniteDiffEnt}, which shows that stationary measures on $\R^d$ with a polynomial tail decay have finite differential entropy. Variants of this result are used in \cite{KittleKoglerAC} and \cite{KittleKoglerDim}. 

To give a contracting on average example where Lemma~\ref{FiniteDiffEnt} applies, consider for $x \in \R$ and $q \geq 1$ the similarities $$g_1(x) = \frac{q}{q - 1}x + 1 \quad\quad \text{ and } \quad\quad g_2(x) = \frac{q}{q + 3}x -1.$$ Then by Corollary 1.10 of \cite{KittleKoglerAC}, if $q$ is a sufficiently large prime, the self-similar measure of $\frac{1}{3}\delta_{g_1} + \frac{2}{3}\delta_{g_2}$ is absolutely continuous and so Lemma~\ref{FiniteDiffEnt} applies by Theorem~\ref{HutchinsonSuperGen1}.

\begin{lemma}\label{FiniteDiffEnt}
    Let $\nu$ be an absolutely continuous probability measure on $\R^d$. Assume that for some $\alpha > 0$ as $R \to \infty$, $$\nu(\{ x\in \R^d \,:\, |x| \geq R \}) \ll_{\nu} R^{-\alpha}.$$ Then $\nu$ has finite differential entropy. 
\end{lemma}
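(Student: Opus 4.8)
The plan is to reduce the statement to an upper bound, exploiting that the integrand $h(t) = -t\log t$ is bounded above (its maximum is $1/e$, attained at $t = 1/e$) and only unbounded below, as $t \to \infty$. Consequently the sole mechanism by which $H(\nu) = \int h(f_\nu)\, d\Haarof{\R^d}$ could fail to be finite from above is the region where $0 < f_\nu < 1$, on which $h(f_\nu) > 0$; the region $\{f_\nu > 1\}$ contributes a non-positive amount and is irrelevant for ruling out $H(\nu) = +\infty$. Thus it suffices to prove that
$$ N := \int_{\{0 < f_\nu < 1\}} (-f_\nu \log f_\nu)\, d\Haarof{\R^d} < \infty, $$
since then $H(\nu) \leq N < \infty$.

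To bound $N$ I would compare $\nu$ against a fixed reference probability density with logarithmically controlled tails. Fix $\beta \in (0,\alpha)$ and set $g(x) = c_{d,\beta}(1+|x|)^{-(d+\beta)}$, where $c_{d,\beta} > 0$ normalises $g$ to a probability density; the defining integral converges precisely because $d + \beta > d$. The elementary inequality $t\log(t/s) \geq t - s$, valid for all $t, s > 0$, rearranges to the pointwise bound $-t\log t \leq t(-\log s) + s$. Applying this with $t = f_\nu(x)$ and $s = g(x)$, integrating over $\{0 < f_\nu < 1\}$, and using $\int f_\nu \leq 1$, $\int g = 1$ together with $-\log g(x) = -\log c_{d,\beta} + (d+\beta)\log(1+|x|)$ and $\log(1+|x|) \geq 0$, one obtains
$$ N \ll_{d,\beta} 1 + \int_{\R^d} \log(1+|x|)\, d\nu(x). $$
Everything therefore reduces to the finiteness of the logarithmic moment $\int \log(1+|x|)\, d\nu$.

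This final reduction is exactly where the polynomial tail hypothesis enters, and any positive exponent $\alpha$ suffices. By the layer-cake formula,
$$ \int_{\R^d}\log(1+|x|)\, d\nu(x) = \int_0^\infty \nu(\{\, x \in \R^d : |x| > e^s - 1 \,\})\, ds, $$
and for large $s$ the hypothesis bounds the integrand by $\ll_\nu (e^s - 1)^{-\alpha} \asymp e^{-\alpha s}$, which is integrable; equivalently, since $\log(1+|x|) \ll_\beta 1 + |x|^\beta$ for $\beta \in (0,\alpha)$, the tail bound yields $\int |x|^\beta\, d\nu < \infty$. Either way the logarithmic moment is finite, whence $N < \infty$ and $H(\nu) \leq N < \infty$.

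The step requiring the most care is the reduction in the first paragraph: one must be explicit that the assertion is an upper bound ruling out $H(\nu) = +\infty$, and split at the threshold $f_\nu = 1$ so that only the positive part of the integrand is estimated. The contribution of $\{f_\nu > 1\}$ is not controlled by tail decay at all — an absolutely continuous density may carry an integrable spike forcing $\int_{\{f_\nu > 1\}} f_\nu\log f_\nu = +\infty$ and hence $H(\nu) = -\infty$ — but this is harmless for finiteness from above and does not arise for the bounded densities of the intended application to smoothenings. As an alternative to the comparison density, one can bound $N$ directly by decomposing $\R^d$ into dyadic annuli $A_k = \{\, 2^k \leq |x| < 2^{k+1} \,\}$, applying Jensen's inequality to the concave function $t \mapsto -t\log t$ on each $A_k$ to obtain a bound of the form $\nu(A_k)\log(|A_k|/\nu(A_k))$, and summing the resulting estimate $\ll k\, 2^{-k\alpha}$ over $k \geq 1$.
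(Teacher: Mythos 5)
Your proof is correct, but it takes a genuinely different route from the paper's. The paper decomposes $\R^d$ into the annuli $B_{L^{i+1}}\setminus B_{L^{i}}$ and applies Jensen's inequality for the concave function $h(t)=-t\log t$ with respect to normalized Lebesgue measure on each annulus, obtaining $\int_{B_{L^{i+1}}\setminus B_{L^{i}}} h(f_{\nu})\,d\Haarof{\R^d} \leq h(p_i)+p_i\log m_i$ with $p_i=\nu(B_{L^{i+1}}\setminus B_{L^{i}})$ and $m_i=\Haarof{\R^d}(B_{L^{i+1}}\setminus B_{L^{i}})$, and then sums using $p_i\leq L^{-i\alpha}$ and $m_i\leq L^{d(i+1)}$. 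You instead split at the level set $f_{\nu}=1$, note that only the region $\{0<f_{\nu}<1\}$ can force $H(\nu)=+\infty$, and bound its contribution by a cross-entropy comparison with the fixed heavy-tailed density $g(x)=c_{d,\beta}(1+|x|)^{-(d+\beta)}$ via the pointwise Gibbs inequality $-t\log t\leq t(-\log s)+s$, reducing the lemma to finiteness of the logarithmic moment $\int\log(1+|x|)\,d\nu$, which the tail hypothesis gives by the layer-cake formula. Both arguments prove the same statement, and in the same sense: the lemma, as proved in the paper, is an upper bound $H(\nu)<+\infty$; neither the hypothesis nor either proof rules out $H(\nu)=-\infty$, a subtlety you correctly make explicit (the paper leaves it implicit, since its Jensen bound is also only one-sided). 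Your route has the advantage of isolating the true input — a finite logarithmic moment, so any $\alpha>0$ and indeed far weaker decay suffices — whereas the paper's annulus-plus-Jensen argument is more self-contained and quantitative in the shell masses $p_i$. Your closing sketch of the dyadic-annuli alternative is essentially the paper's proof; if you carried it out, you would need the same monotonicity remark the paper makes (that $h$, equivalently $p\mapsto p\log(m/p)$, is increasing for small $p$) to pass from $\nu(A_k)\leq 2^{-k\alpha}$ to the bound $\ll k\,2^{-k\alpha}$, since $\log(|A_k|/\nu(A_k))$ alone is not monotone in $\nu(A_k)$.
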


\begin{proof}
    Let $f_{\nu}$ be the density of $\nu$. Denote for $R > 0$ by $B_R$ the open $R$-ball around $0$ in $\R^d$. Let $L > 1$ be a sufficiently large constant and for $i = 0,1,2,\ldots $  write $p_i = \nu(B_{L^{i+1}}\backslash B_{L^{i}})$ such that, by assumption, $p_i \leq \nu(\R^d \backslash B_{L^i}) \leq L^{-i\alpha}$ for sufficiently large $i$. Thus it holds for $h(x)$ from \eqref{Def:DiffEnt} and $m_i = \Haarof{\R^d}(B_{L^{i+1}}\backslash B_{L^{i}})$ and using the convention that $0 \log 0 = 0$,
    \begin{align*}
        H(\nu) &= \sum_{i \geq 0} \int_{B_{L^{i+1}}\backslash B_{L^{i}}} -f_{\nu} \log f_{\nu} \, d\Haarof{\R^d} \\
        &= \sum_{i \geq 0} \int_{B_{L^{i+1}}\backslash B_{L^{i}}} -f_{\nu} \log \left( \frac{f_{\nu}m_i}{m_i} \right) \, d\Haarof{\R^d} \\
        &= \sum_{i \geq 0} \left(\int h(f_{\nu}m_i) \frac{1_{B_{L^{i+1}}\backslash B_{L^{i}}}}{m_i} \, d\Haarof{\R^d} + p_i\log m_i \right) \\
        &\leq \sum_{i \geq 0} h(p_i) +  p_i\log m_i,
    \end{align*} by using Jensen's inequality for the concave function $h$ and with the probability measure $\frac{1_{B_{L^{i+1}}\backslash B_{L^{i}}}}{m_i} \, d\Haarof{\R^d}$ in the last line. As $h(x)$ increases monotonically for small $x$, we have $p_i \leq L^{-i\alpha}$ and $h(p_i) \leq h(L^{-i\alpha})$ for $i \geq I$ with $I$ sufficiently large. Since moreover $m_i\leq L^{d(i+ 1)}$, the claim of the lemma follows by  
    \begin{align*}
        H(\nu) &\leq  \sum_{i \geq 0} h(p_i) +  p_i\log m_i \\
        &\leq \left(\sum_{0 \leq i \leq I} h(p_i) +  p_i\log m_i \right) + \left(\sum_{i \geq I} h(L^{-i\alpha}) + L^{-i\alpha}\log m_i \right) \\
        &\leq \left(\sum_{0 \leq i \leq I} h(p_i) +  p_i\log m_i \right) + \left(\sum_{i \geq I} (i\alpha + d(i + 1)) L^{-i\alpha}\log L  \right)\\
        &< \infty.
    \end{align*} 

\end{proof}

\subsection{A compactly supported sequence in $L(X)$}\label{L(X)example}

We recall that $L(X)$ is endowed with the compact open topology, i.e., the topology of uniform convergence on compact spaces. In this subsection, we show that even on $X = \R$ there are examples where $\mu$ is compactly supported on $L(X)$ with $\chi_{\mu} < 0$ while $\mathbb{E}_{g\sim \mu}[\rho(g)^t] = \infty$ for all $t > 0$. The latter shows, as mentioned in the introduction, that the assumption that $\mu$ being compactly supported does not imply that $\mathbb{E}_{g \sim \mu}[\rho(g)^t] < \infty$ for any $t\neq 0$. 

For example, given a sequence of real numbers $(a_n)_{n \geq 1}$ consider the following sequence of Lipschitz functions defined for $x \in \R^d$ as $$f_n(x) = \begin{cases}
    0 & \text{if } x \leq n, \\
    a_n(x-n) &\text{if } x \geq n.
\end{cases}$$ 

We note that $\rho(f_n) = a_n$ and that $f_n$ converges in $L(X)$ to the constant $0$ function, which implies that the probability measure $$\mu = \sum_{n \geq 1} p_n \delta_{f_n}$$ is compactly supported for any probability vector $(p_1, p_2, \ldots)$.

Concretely, we now take for $n \geq 1$, $$p_n = \frac{6}{\pi^2}\frac{1}{n^2} \quad\quad \text{ and } \quad\quad a_n = \begin{cases}
    \frac{1}{n} & \text{if } n \text{ is not a square number}, \\
    \sqrt{n}^{\sqrt{n}} & \text{if } n  \text{ is a square number}.\end{cases}$$
The reader may then check that $\chi_{\mu} \in (-\infty, 0)$ and $\mathbb{E}_{g\sim \mu}[\rho(g)^t] = \infty$ for all $t \neq 0$, while the unique stationary measure is $\delta_0$.

\subsection{Polynomial Lower Bound for Self-Similar Measures} \label{section:PolynomialLower}

Denote by $$\mathrm{Sim}(\R^d) < \mathrm{Aff}(\R^d)$$ the group of similarities of $\R^d$, where an affine map $g \in \mathrm{Aff}(\R^d)$ is a similarity if there exists $\rho(g) \in \R_{>0}$ and $U(g) \in \mathrm{O}(d)$ (the orthogonal matrices) such that $A(g) = \rho(g)U(g)$. Then $||A(g)|| = \rho(g)$ and we note that $\lambda_{\mu} = \chi_\mu$ for a probability measure $\mu$ on $\mathrm{Sim}(\R^d)$.

Let $\mu$ be a finitely supported probability measure on $\mathrm{Sim}(\R^d)$ with $\chi_{\mu} < 0$. Then the associated stationary measure $\nu$ is called a \textbf{self-similar measure}. To show that Theorem~\ref{HutchinsonSuperGen1} is sharp for self-similar measures, we give a polynomial lower bound on $\nu(B_R^c)$ under the assumption that $\mu$ is finitely supported with $\chi_{\mu} < 0$ and that there is $g \in \mathrm{Sim}(\R^d)$ such that $\rho(g) > 1$. The latter situation happens often as for example when $\mu = \frac{1}{2}\delta_{g_1} + \frac{1}{2}\delta_{g_2}$ and say $\rho(g_1) = 
\frac{1}{e}$ then we can choose $\rho(g_2) \in (1,e)$.

In contrast, by Hutchinson's theorem \cite{Hutchinson1981}, if $\rho(g) < 1$ for all $g \in \mathrm{supp}(\mu)$, then $\nu$ is compactly supported. Furthermore, if we only assume that $\rho(g) \leq 1$ for all $g \in \mathrm{supp}(\mu)$, then the support of $\nu$ may be compact or non-compact as the following two examples show. Indeed, if $\mu = \frac{1}{2}(\delta_{g_1} + \delta_{g_2})$ with $g_1(x) = \frac{1}{2}x + 1$ and $g_2(x) = -x$ for $x\in \R$, then $\nu$ has compact support. On the other hand, when we change $g_2$ to $g_2(x) = x + 1$ for $x\in \R$, the support of $\nu$ is non-compact. Therefore, the requirement $\rho(g) > 1$ for some $g \in \mathrm{supp}(\mu)$ is necessary for Lemma~\ref{partexpss} to hold.

\begin{lemma}\label{partexpss}
    Let $\mu$ be a probability measure on $\mathrm{Sim}(\R^d)$ supported on finitely many similarities without a common fixed point. Assume that $\chi_{\mu} < 0$ and that there is $g \in \mathrm{Sim}(\R^d)$ such that $\rho(g) > 1$. Then there is $\alpha_1 = \alpha_1(\mu) > 0$ such that for $x \in \R^d$, 
    \begin{equation}\label{non-compactsupp}
        \nu(B_R(x)^c) \gg_{\mu,x} R^{-\alpha_1}.
    \end{equation}
\end{lemma}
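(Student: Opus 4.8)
The plan is to exploit the single expanding generator to push mass out to large distances at a controlled (exponentially small in the number of applications, hence polynomially small in the resulting displacement) probability. Write $\mu = \sum_i p_i \delta_{g_i}$ and let $g_0 \in \mathrm{supp}(\mu)$ be a generator with $\rho_0 := \rho(g_0) > 1$, occurring with probability $p_0 > 0$. Since $g_0$ is a similarity of ratio $\rho_0 > 1$, its linear part $A(g_0) = \rho_0 U(g_0)$ has all eigenvalues of modulus $\rho_0 \neq 1$, so $g_0$ has a unique fixed point $x_0$ and $|g_0^k(y) - x_0| = \rho_0^k |y - x_0|$ for all $y \in \R^d$ and $k \geq 1$. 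Note also that since $\chi_\mu < 0$ together with $\log \rho_0 > 0$ forces some other generator to be a genuine contraction, $\mathrm{supp}(\mu)$ contains at least two maps, whence $p_0 < 1$.

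First I would record the self-similarity of the random model from the existence proof. With $z(\omega) = \lim_n \gamma_1 \cdots \gamma_n x$ distributed as $\nu$, the shifted limit $z_k(\omega) = \lim_n \gamma_{k+1} \cdots \gamma_n x$ is again distributed as $\nu$, is independent of $(\gamma_1, \ldots, \gamma_k)$, and satisfies $z = \gamma_1 \cdots \gamma_k\, z_k$ by continuity of the finite composition $\gamma_1 \cdots \gamma_k$. On the event $\{\gamma_1 = \cdots = \gamma_k = g_0\}$, which has probability $p_0^k$ by independence, this gives $z = g_0^k z_k$ and hence $|z - x_0| = \rho_0^k |z_k - x_0|$.

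The next step is to produce a fixed positive amount of mass bounded away from $x_0$. Here I would invoke the no-common-fixed-point hypothesis: if $\nu = \delta_{x_0}$, then stationarity $\mu * \nu = \nu$ would force $g_i x_0 = x_0$ for every $i$ with $p_i > 0$, making $x_0$ a common fixed point, a contradiction. Hence $\nu \neq \delta_{x_0}$ and, by continuity from below, there are constants $c, \beta > 0$ with $\nu(\{ y : |y - x_0| \geq c \}) = \beta$. Combining with the previous paragraph and the independence of $(\gamma_1,\ldots,\gamma_k)$ from $z_k$,
\[
\nu(\{ y : |y - x_0| \geq \rho_0^k c \}) \;\geq\; \mathbb{P}[\gamma_1 = \cdots = \gamma_k = g_0]\cdot \mathbb{P}[|z_k - x_0| \geq c] \;=\; \beta\, p_0^k .
\]
Given $R$, choosing $k$ minimal with $\rho_0^k c \geq R$ (so that $k \asymp_{\mu} \log R$) converts this into $\nu(\{ y : |y - x_0| \geq R \}) \gg_{\mu} R^{-\alpha_1}$ with $\alpha_1 = \log(1/p_0)/\log \rho_0 > 0$, and a triangle-inequality comparison between the balls centred at $x_0$ and at an arbitrary $x$ (using $\{|y-x|\geq R\} \supseteq \{|y-x_0|\geq R + |x-x_0|\}$) upgrades this to \eqref{non-compactsupp} with the claimed dependence on $x$.

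The only genuinely delicate points are the two structural facts used above: the decomposition $z = \gamma_1 \cdots \gamma_k z_k$ with $z_k$ an independent copy of $z$, which follows directly from the explicit construction of $z$ in the existence proof together with continuity of $\gamma_1 \cdots \gamma_k$; and the exclusion $\nu \neq \delta_{x_0}$, which is exactly where the no-common-fixed-point assumption is used and which also guarantees $\alpha_1 > 0$ via $p_0 < 1$. Everything else is a routine conversion from the exponential-in-$k$ estimate to a power of $R$, and I do not expect any essential difficulty there.
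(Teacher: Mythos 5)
Your proposal is correct and takes essentially the same approach as the paper: both iterate the expanding generator $g_0$ exactly $k$ times (an event of probability $p_0^k$) to push a fixed positive amount of $\nu$-mass lying at distance $\geq c$ from the fixed point $x_0$ out to distance $\geq \rho_0^k c$, yielding the identical exponent $\alpha_1 = \log(1/p_0)/\log\rho_0$. The only cosmetic difference is that you encode stationarity via the backward-iteration random variable ($z = \gamma_1\cdots\gamma_k z_k$ with $z_k$ an independent copy), whereas the paper applies the measure-level inequality $\nu = \mu^{*k}*\nu \geq p_0^k\,\delta_{g_0^k}*\nu$ directly; your extra details (ruling out $\nu=\delta_{x_0}$ via the no-common-fixed-point hypothesis, and the triangle-inequality reduction to general $x$) are points the paper treats more briefly but identically in substance.
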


\begin{proof} We note that it suffices to prove the claim for a fixed $x \in \R^d$. Write $\mu = \sum_{g} p_g \delta_g$ and let $g_0 \in \mathrm{supp}(\mu)$ be a map with $\rho(g_0) > 1$ and $p_{g_0} > 0$. For convenience write $\rho_0 = \rho(g_0)$, $p_0 = p_{g_0}$ and denote by $x_0$ the unique fixed point of $g_0$, which exists since $g_0^{-1}$ is contractive and has a unique fixed point. Since the support of $\nu$ contains at least two points, we may choose $r > 0$ depending on $\mu$ such that $\nu(B_r(x_0)^c) > 0$ and note that $g_0^n B_r(x_0)^c \subset B_{\rho_0^n r}(x_0)^c$. Therefore 
    \begin{align*}
        \nu(B_{\rho_0^n r}(x_0)^c) &= (\mu^{*n} * \nu)(B_{\rho_0^n r}(x_0)^c) \\ &\geq (p_0^n \delta_{g_0^n} * \nu)(B_{\rho_0^n r}(x_0)^c) \\
        &\geq p_0^n \nu(g_0^{-n}B_{\rho_0^n r}(x_0)^c) \\
        &\geq p_0^n \nu(B_r(x_0)^c).
    \end{align*} To prove the claim, let $R \geq 1$ be such that $\rho_0^{n}r \leq R \leq \rho_0^{n + 1} r$ for an integer $n \geq 0$. In particular $n\log \rho_0 + \log r\leq \log R \leq 2n \log \rho_0$ for sufficiently large $R$. Setting $\alpha_1 = - \frac{\log p_0}{\log \rho_0} > 0$,
    \begin{align*}
        \nu(B_R(x_0)^c) \geq \nu(B_{\rho_0^n r}(x_0)^c) \gg_{\mu} p_0^n = e^{(\log p_0)n} \gg_{\mu} e^{-\alpha_1 \cdot (n \log \rho_0)} \gg_{\mu} R^{-\alpha_1}.
    \end{align*} 
\end{proof}

\bibliography{referencesgeneral.bib}
\end{document}